\documentclass{amsart}

\usepackage{amsmath, amsthm, amssymb, amstext, amsfonts}
\usepackage{enumerate}
\usepackage{graphicx}
\usepackage[applemac]{inputenc}
\usepackage[T1]{fontenc} 

\theoremstyle{plain}

\newtheorem{thm}{Theorem}[section]
\newtheorem{lem}[thm]{Lemma}

\newtheorem{prop}[thm]{Proposition}

\newtheorem*{prob*}{Problem}

\theoremstyle{definition}

\newtheorem{rem}[thm]{Remark}

\newcommand{\N}{\ensuremath{\mathbb{N}}}

\newcommand{\sm}{\ensuremath{\smallsetminus}}

\newcommand{\diam}{\textnormal{diam}}

\newcommand{\inv}{\ensuremath{^{-1}}}

\newcommand{\rand}{\partial}
\newcommand{\Aut}{\textnormal{Aut}}
\newcommand{\asdim}{{\rm asdim}}

\newcommand{\es}{\ensuremath{\emptyset}}

\newcommand{\sub}{\subseteq}

\newcommand{\Int}{\textnormal{Int}}
\newcommand{\mult}{\textnormal{mult}}

\def\ta{tree amalgamation}
\def\qt{quasi-tran\-si\-tive}
\def\qi{quasi-iso\-metric}
\def\qiy{quasi-iso\-me\-try}

\newcommand{\comment}[1]{}

\newcommand{\real}{{\mathbb R}}

\newcommand{\fwd}{\overset{\scriptscriptstyle\rightarrow}}
\newcommand{\bwd}{\overset{\scriptscriptstyle\leftarrow}}

\newcommand{\Scal}{\ensuremath{\mathcal{S}}}
\newcommand{\Ucal}{\ensuremath{\mathcal{U}}}
\newcommand{\Vcal}{\ensuremath{\mathcal{V}}}
\newcommand{\Wcal}{\ensuremath{\mathcal{W}}}

\def\?#1{\vadjust{\vbox to 0pt{\vss\vskip-8pt\leftline{%
     \llap{\hbox{\vbox{\pretolerance=-1
     \doublehyphendemerits=0\finalhyphendemerits=0
     \hsize16truemm\tolerance=10000\small
     \lineskip=0pt\lineskiplimit=0pt
     \rightskip=0pt plus16truemm\baselineskip8pt\noindent
     \hskip0pt        
     #1\endgraf}\hskip7truemm}}}\vss}}}


\newenvironment{txteq*}
  {
    \begin{equation*}
    \begin{minipage}[c]{0.85\textwidth} 
    \em                                
  }
  {\end{minipage}\end{equation*}\ignorespacesafterend}

\begin{document}

\title[Asymptotic dimension of multi-ended quasi-transitive graphs]{Asymptotic dimension\\of multi-ended quasi-transitive graphs}
\author{Matthias Hamann}
\thanks{Supported by the Heisenberg-Programme of the Deutsche Forschungsgemeinschaft (DFG Grant HA 8257/1-1).}
\address{Matthias Hamann, Mathematics Institute, University of Warwick, Coventry, UK}
\date{}

\begin{abstract}
We prove the existence of an upper bound on the asymptotic dimension of \ta s of locally finite \qt\ connected graphs.
This generalises a result of Dranishnikov for free products with amalgamation and a result of Tselekidis for HNN-extensions of groups to \ta s of graphs.
As a corollary, we obtain an upper bound on the asymptotic dimension of a multi-ended \qt\ locally finite graph based on any of their factorisations.
\end{abstract}

\maketitle

\section{Introduction}\label{sec_Intro}

Asymptotic dimension of metric spaces was introduced by Gromov~\cite{G-AsymptoticInv}.
It is a \qiy\ invariant and hence an invariant of finitely generated groups.
Thus, it is interesting to see how the asymptotic dimension behaves with respect to free products with amalgamations and HNN-extensions.
Bell and Dranishnikov~\cite{BD-AsdimGroupOnTrees} proved for these products that the asymptotic dimension is finite provided that the asymptotic dimension of the factors is finite.
The best upper bound for the free product with amalgamation $A\ast_CB$ of finitely generated groups $A$ and~$B$ is given by Dranishnikov~\cite{D-AsdimAmalgamCoxeter}:
\[
\asdim(A\ast_CB)\leq\max\{\asdim(A),\asdim(B),\asdim(C)+1\}.
\]
Recently, Tselekidis~\cite{T-AsdimHNN} obtained a similar upper bound for the HNN-extension $A\ast_C$ of a finitely generated group~$A$:
\[
\asdim(A\ast_C)\leq\max\{\asdim(A),\asdim(C)+1\}.
\]

We generalise these two results in our main theorem.

\begin{thm}\label{thm_main}
	Let $G_1,G_2$ be locally finite connected graphs and let $\Gamma_1,\Gamma_2$ be groups acting \qt ly on $G_1,G_2$, respectively.
	Let $G=G_1\ast G_2$ be the \ta\ of finite identification respecting the actions of $\Gamma_1$ and $\Gamma_2$.
	Then
	\[
	\asdim(G)\leq \max\{\asdim(G_1),\asdim(G_2),\asdim(C)+1\},
	\]
	where $C$ is an arbitrary adhesion set of $G_1\ast G_2$.
\end{thm}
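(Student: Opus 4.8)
The plan is to reduce the theorem to the standard characterisation of asymptotic dimension by uniformly bounded $d$-disjoint families and to build, for every scale $d$, a cover of $G$ by $n+1$ such families, where $n:=\max\{\asdim G_1,\asdim G_2,\asdim C+1\}$. First I would record the combinatorial data of the \ta: the structure tree $T$ (bipartite, its two sides indexing the copies of $G_1$ and of $G_2$), the vertex spaces $X_v\subseteq G$ (copies of $G_1$ or $G_2$), and the edge spaces $A_e$ (copies of the adhesion set $C$), with $A_e=X_u\cap X_w$ for $e=uw$ and $\bigcup_v X_v=V(G)$. The crucial uniformity comes from the quasi-transitivity of the actions of $\Gamma_1,\Gamma_2$: there are only finitely many isometry types of vertex spaces and of edge spaces, so all asymptotic-dimension data (control functions and the witnessing $d$-disjoint covers for $\asdim X_v\le n_0:=\max\{\asdim G_1,\asdim G_2\}$ and for $\asdim A_e\le n_C:=\asdim C$) may be chosen uniformly in $v$ and in $e$. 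I would also use that $\asdim T\le 1$ and that each inclusion $X_v\hookrightarrow G$ and $A_e\hookrightarrow G$ is a quasi-isometric embedding with uniform constants.

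The core of the argument is to split $G$ into a vertex part and an edge part. For a parameter $R=R(d)$ fixed later, set $K_v:=X_v\setminus N_R\!\big(\bigcup_{e\ni v}A_e\big)$, let $U:=\bigcup_v K_v$, and let $W:=G\setminus U$; the latter is contained in, and coarsely equal to, the $R$-neighbourhood $\bigcup_e N_R(A_e)$ of the union of edge spaces. The point of deleting the $R$-collars is separation: any path joining two distinct cores $K_u,K_v$ must leave a vertex space through an edge space, and within each vertex space that edge space lies at distance $>R$ from the core, so $\{K_v\}_v$ is $2R$-disjoint in $G$. Since each $K_v$ is a subset of a vertex space it has $\asdim K_v\le n_0$ uniformly; merging the uniform $(n_0+1)$-fold $d$-disjoint covers of the individual cores (which are $2R>d$ apart) yields an $(n_0+1)$-fold $d$-disjoint uniformly bounded cover of $U$, whence $\asdim U\le n_0$.

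The delicate step, and the one I expect to be the main obstacle, is to show $\asdim W\le n_C+1$ rather than merely $\le n_0+1$: one must decouple the decomposition of the collar region from the ambient dimension of the vertex spaces. Consider the coarse map $W\to T$ sending a collar $N_R(A_e)$ towards the midpoint of $e$; since $\asdim T\le 1$, a Hurewicz-type theorem reduces the task to bounding the fibres over $R'$-balls of $T$, each of which is a union $\bigcup_{e\in B_{R'}(v)}N_R(A_e)$ of collars of edge spaces accumulating inside the vertex spaces near $v$. A priori such a fibre sits inside a vertex space and only gives the bound $n_0$. To obtain $n_C$ instead, I would exploit that $G$ is locally finite with \emph{finite} adhesion sets: within any fixed $G$-distance of a point there are only boundedly many edge spaces, so the ``$s$-closeness'' graph on edge spaces has bounded degree and hence bounded chromatic number. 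Colouring the edge spaces so that equally coloured ones are $s$-far makes each colour class an $s$-disjoint family of sets each coarsely equivalent to a single copy of $C$, hence of $\asdim\le n_C$; the finite union theorem over the finitely many colours then gives each fibre $\asdim\le n_C$, uniformly in $v$ by quasi-transitivity. Feeding this into the Hurewicz theorem, the base $T$ contributing exactly one extra dimension, yields $\asdim W\le n_C+1$. The genuine work here is verifying the bounded-multiplicity and separation claims directly from the \ta\ structure, so that the edge-space covers can be assembled along $T$ without their dimensions adding to that of the vertex spaces.

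Finally I would combine the two parts by the finite union theorem for asymptotic dimension: from $G=U\cup W$ with $\asdim U\le n_0$ and $\asdim W\le n_C+1$ we obtain $\asdim G\le\max\{n_0,n_C+1\}=\max\{\asdim G_1,\asdim G_2,\asdim C+1\}$, as claimed. Throughout, quasi-transitivity is what renders every estimate uniform in $v$ and $e$, while the nested choice of $R$ (large against $d$) and of the colouring scale $s$ (large against $R$) is what synchronises the core-separation, the collar-colouring, and the finitely many isometry types into a single $(n+1)$-fold $d$-disjoint cover of $G$ at each scale $d$.
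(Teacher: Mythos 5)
Your architecture is genuinely different from the paper's (the paper does not split $G$ into cores and collars and does not use a Hurewicz-type theorem over $T$; see below), but as proposed it has a real gap exactly at the step you yourself flag as the crux. Your bound $\asdim(W)\le \asdim(C)+1$ rests on colouring the edge spaces with finitely many colours so that equally coloured ones are $s$-separated, and you justify the finite chromatic number by appealing to ``$G$ is locally finite with \emph{finite} adhesion sets''. The theorem does not grant this: only finite \emph{identification} is assumed, and the paper stresses in the introduction that the adhesion set $C$ may be infinite. Your justification --- within any fixed distance of a \emph{point} there are boundedly many edge spaces --- bounds the number of edge spaces meeting a fixed ball, but when the edge spaces are infinite this does not bound the degree of the $s$-closeness graph: $A_e$ is $s$-close to $A_{e'}$ as soon as they approach each other \emph{somewhere} along their unbounded extent, and nothing in the hypotheses prevents infinitely many edge spaces of a single copy from being pairwise $s$-close (with the close approaches occurring at pairwise far-apart locations), i.e.\ the closeness graph may contain infinite cliques for all your argument shows. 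Without the colouring, the fibres of your map $W\to T$ only inherit the trivial bound $\max\{\asdim(G_1),\asdim(G_2)\}$ from the ambient vertex spaces, and the Hurewicz step then yields $\asdim(W)\le\max\{\asdim(G_1),\asdim(G_2)\}+1$, which does not prove the theorem.

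A secondary problem: you assert that the inclusions $X_v\hookrightarrow G$ are uniform quasi-isometric embeddings. This is false in general for \ta s: a geodesic of $G$ may exit $X_v$ through an adhesion set and re-enter through the \emph{same} adhesion set, so if an adhesion set has large diameter inside $X_v$ the neighbouring copy can shortcut it; consequently the $d$-disjoint covers of your cores $K_v$, if built in the intrinsic metric of $X_v$, need not remain $d$-disjoint in $G$. It is instructive to compare with the paper's route, which avoids both issues: it works in $G_1+G_2$ (quasi-isometric to $G$), partitions it along spheres of the connecting tree at a large scale $r>4R$, and applies Dranishnikov's Partition Theorem. The boundary pieces there are the sets $M_R^t$, $R$-spheres around the union of the (at most two, by quasi-transitivity and the respected actions) orbit representatives of adhesion sets in a copy; their pairwise $R$-separation is forced purely by the tree geometry --- any path between $M_R^t$ and $M_R^{t'}$ for distinct $t,t'$ at tree-distance in $r\N$ must traverse an intermediate copy --- rather than by any colouring, and the uniform dimension bound for the partition pieces is obtained inductively from the Finite and Infinite Union Theorems (the paper's Lemma on the sets $O_m$, $Q_m$), with no Hurewicz-type theorem needed. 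So even a repaired version of your collar-colouring scheme would require a new idea to handle infinite adhesion sets, which is precisely the generality the theorem claims.
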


We note that neither in the results of Dranishnikov and of Tselekidis the group $C$ has to be finite nor in our result the adhesion set $C$ has to be finite.

The bound in Theorem~\ref{thm_main} is sharp as some \ta s of finite graphs are \qi\ to trees by~\cite[Theorem 7.4]{HLMR} and thus have asymptotic dimension~$1$ but finite graphs have asymptotic dimension~$0$.

Let $G$ be a locally finite \qt\ connected graph with more than one end.
A tuple $(G_1,\ldots,G_n)$ of locally finite \qt\ connected graphs is a \emph{factorisation} of~$G$ if $G$ is obtained by iterated non-trivial \ta s of all $G_i$ of finite adhesion and finite identification respecting the group actions.

By~\cite{HLMR}, all multi-ended \qt\ locally finite connected graphs have a \emph{non-trivial} factorisation, i.\,e.\ a factorisation with more than one factor.
But there are examples of Dunwoody~\cite{D-AnInaccessibleGroup,D-AnInaccessibleGraph} that show that not every such graph has a \emph{terminal} factorisation, i.\,e.\ a factorisation all of whose factors have at most one end.

The following result is an immediate corollary of Theorem~\ref{thm_main}.

\begin{thm}\label{thm_mainCor}
	Let $(G_1,\ldots,G_n)$ be a factorisation of a locally finite \qt\ connected graph $G$.
	Then $\asdim(G)\leq\max\{1, \asdim(G_i)\mid 1\leq i\leq n\}$.\qed
\end{thm}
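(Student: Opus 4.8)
The plan is to obtain Theorem~\ref{thm_mainCor} from Theorem~\ref{thm_main} by induction on the number of tree amalgamations used to build $G$ from its factors. The one extra observation needed is that, in a factorisation, every tree amalgamation has \emph{finite} adhesion, so each adhesion set $C$ is a finite graph and hence $\asdim(C)=0$. Therefore the bound of Theorem~\ref{thm_main} specialises, for a single tree amalgamation $H_1\ast H_2$ of finite adhesion and finite identification respecting the group actions, to
\[
\asdim(H_1\ast H_2)\leq\max\{\asdim(H_1),\asdim(H_2),1\}.
\]

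I would then run the induction as follows. If no tree amalgamation is involved, then $n=1$ and $G=G_1$, so $\asdim(G)=\asdim(G_1)\leq\max\{1,\asdim(G_1)\}$ and the claim holds. Otherwise, consider the final tree amalgamation expressing $G$, which combines two graphs $H_1$ and $H_2$ (possibly equal, as in the HNN-type case); each $H_j$ is built from a sub-tuple of $(G_1,\ldots,G_n)$ by strictly fewer tree amalgamations, again of finite adhesion and finite identification and respecting the group actions. The induction hypothesis applied to each $H_j$ gives $\asdim(H_j)\leq\max\{1,\asdim(G_i)\mid 1\leq i\leq n\}$, and substituting this into the displayed inequality yields
\[
\asdim(G)\leq\max\{\asdim(H_1),\asdim(H_2),1\}\leq\max\{1,\asdim(G_i)\mid 1\leq i\leq n\},
\]
which is the assertion.

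The step that requires care --- and the one I expect to be the main obstacle --- is justifying that the induction may be applied at all: one must know that each intermediate graph $H_j$ is again a locally finite, \qt, connected graph equipped with the group action needed for Theorem~\ref{thm_main} and for the induction hypothesis. This is a closure property of tree amalgamations of finite adhesion and finite identification that respect the group actions, and I would invoke the structural results of~\cite{HLMR} for it rather than reprove them. Granting that closure property, the argument reduces to the routine induction above, with the finiteness of the adhesion sets being exactly what collapses the term $\asdim(C)+1$ to $1$.
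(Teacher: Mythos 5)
Your proposal is correct and is essentially the proof the paper intends: the paper labels Theorem~\ref{thm_mainCor} an immediate corollary of Theorem~\ref{thm_main}, and the implicit argument is exactly your induction over the iterated tree amalgamations, with finite adhesion forcing $\asdim(C)=0$ so that the term $\asdim(C)+1$ collapses to~$1$, and with the closure properties (quasi-transitivity, local finiteness, connectedness of the intermediate factors) supplied by~\cite{HLMR} as you indicate.
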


\section{Tree amalgamations}

In this section, we will define all notations and state all results that we need in the context of \ta s.

A tree is \emph{$(p_1,p_2)$-semiregular} if for the canonical bipartition $\{V_1,V_2\}$ of its vertex set all vertices in~$V_i$ have degree~$p_i$ for $i=1,2$.

Let~$I_1$ and~$I_2$ be disjoint sets and let $G_1$ and~$G_2$ be graphs.
Let $(S_k^i)_{k \in I_i}$ be families of subsets of~$V(G_i)$ for $i=1,2$ such that all sets~$S_k^i$ have the same cardinality.
For all $k \in I_1$ and $\ell \in I_2$, let $\phi_{k \ell} \colon S_k^1 \rightarrow S_\ell^2$ be a bijection.
Set $\phi_{\ell k} = \phi_{k\ell}^{-1}$. 
The maps $\phi_{k\ell}$ and $\phi_{\ell k}$ are the \emph{bonding maps}.

Let~$T$ be a \emph{$(|I_1|,|I_2|)$-semiregular} tree with canonical bipartition $\{V_1,V_2\}$ such that the vertices in~$V_i$ have degree~$|I_i|$.
Let $D(T)$ be the set obtained from the edge set of~$T$ by replacing every $xy\in E(T)$ by two directed edges $\fwd {xy}$ and $\fwd {yx}$. 
For a directed edge $\fwd{e} = \fwd{xy} \in D(T)$, we denote by $\bwd e$ the edge with the reversed orientation, i.\,e.\ $\bwd{e}=\fwd{yx}$.
Let $f\colon D(T) \to I_1 \cup I_2$ be a labelling such that for every $i\in\{1,2\}$ and every $v \in V_i$ each $k\in I_i$ occurs in the set of labels of edges starting at~$v$ precisely once.

For every $i\in\{1,2\}$ and every $v\in V_i$, let $G_v$ be a copy of~$G_i$. 
Denote by~${S^v_k}$ the corresponding copies of~$S_k^i$ in~$V(G_v)$. 
Let $G_1+G_2$ be the graph obtained from the disjoint union of all graphs $G_v$ with $v\in V(T)$ by adding new edges as follows:
for every edge $\fwd e = \fwd{uv}$ with $f(\fwd e) = k$ and $f(\bwd e) = \ell$ we add an edge between each $x\in S^{u}_k$ and $\phi_{k\ell}(x)\in S^v_\ell$.
Note that this does not depend on the orientation we pick for $e$, since $\phi_{\ell k} = \phi_{k\ell}^{-1}$.
If we contract all edges of $G_1+G_2$ that lie outside of the copies $G_v$ we obtain the \emph{tree amalgamation} $G_1\ast_T G_2$ of the graphs~$G_1$ and~$G_2$ over the \emph{connecting tree}~$T$.
If $T$ is clear from the context, we just write $G_1\ast G_2$.
Let $\pi\colon V(G_1+G_2)\to V(G_1\ast G_2)$ be the canonical map that maps each vertex of $G_1+G_2$ to the vertex obtained from it after all the contractions.

The sets $S_k^i$ and their images under~$\pi$ in $G_1\ast G_2$ are the \emph{adhesion sets} of the \ta.
A \ta\ has \emph{finite adhesion} if all its adhesion sets are finite. 
We call a \ta\ $G_1\ast_T G_2$ \emph{trivial} if for some $v\in V(T)$ the restriction of~$\pi$ to $V(G_v)$ is a bijection.

For a vertex $x\in V(G_1\ast_T G_2)$ let $T_x$ be the maximal subtree of~$T$ such that every node of~$T_x$ contains a vertex $y$ with $\pi(y)=x$.
The \emph{identification size} of~$x$ is the cardinality of $V(T_x)$.
The \ta\ has \emph{finite identification} if all identification sizes of its vertices are bounded.

Let $G$ and~$H$ be graphs.
A map $f\colon V(G)\to V(H)$ is a \emph{\qiy} if there are constants $\gamma\geq 1$ and $c\geq 0$ such that
\[
\gamma\inv d_G(x,y)-c\leq d_H(f(x),f(y))\leq\gamma d_G(x,y)+c
\]
for all $x,y\in V(G)$.

\begin{rem}\label{rem_QI+to*}
Let $G$ and $H$ be graphs.
It is obvious from the construction that the map $\pi\colon V(G+H)\to V(G\ast H)$ is a \qiy.
\end{rem}

So far, the \ta\ is independent from any group action.
In the following, we describe some conditions on \ta s that ensure that \ta s of \qt\ graphs are again \qt, see \cite[Lemma~5.3]{HLMR}.

For $i=1,2$, let $\Gamma_i$ be a group acting on~$G_i$.
Let $i\in\{1,2\}$.
The \ta\ \emph{respects $\gamma\in\Gamma_i$} if there is a permutation $\pi$ of~$I_i$ such that for every $k \in I_i$ there exist $\ell \in I_j$ and $\tau$ in the setwise stabiliser of~$S_\ell$ in~$\Gamma_j$ such that 
\[
\phi_{k\ell} = \tau \circ \phi_{\pi(k)\ell} \circ \gamma\mid_{S_k}.
\]
The \ta\ \emph{respects $\Gamma_i$} if it respects every $\gamma \in \Gamma_i$.

Let $k\in I_i$ and let $\ell,\ell'\in I_j$. 
We call the bonding maps from $k$ to $\ell$ and $\ell'$ \emph{consistent} if there exists $\gamma \in \Gamma_j$ such that 
\[
    \phi_{k\ell} = \gamma \circ \phi_{k\ell'}.
\]
The bonding maps between $J_i \subseteq I_i$ and $J_j\subseteq I_j$ are \emph{consistent} if they are consistent for all $k \in J_i$ and $\ell,\ell' \in J_j$.

The \ta\ $G_1\ast G_2$ is of \emph{Type 1 respecting the actions of~$\Gamma_1$ and $\Gamma_2$} if the following holds:
\begin{enumerate}[(i)]
\item \label{itm:T1respaction} The \ta\ respects $\Gamma_1$ and $\Gamma_2$.
\item \label{itm:T1esim} The bonding maps between $I_1$ and $I_2$ are consistent.
\end{enumerate}

The \ta\ $G_1\ast G_2$ is of \emph{Type 2 respecting the actions of~$\Gamma_1$ and $\Gamma_2$} if the following holds:
\begin{enumerate}[(i)]
\item[(o)] \label{itm:T2prelim} $G_1=G_2=:G$, $\Gamma_1=\Gamma_2=:\Gamma$ and  $I_1=I_2=:I$,\footnote{Formally we would have to work with a bijective map $I_1\to I_2$ since we asked $I_1$ and $I_2$ to be disjoint.} and there exists $J \subseteq I$ such that $f(\fwd e) \in J$ if and only if $f(\bwd e) \notin J$.
\item \label{itm:T2respaction} The \ta\ respects $\Gamma$.
\item \label{itm:T2esim} The bonding maps between $J$ and $I\setminus J$ are consistent.
\end{enumerate}

The \ta\ $G_1\ast G_2$ \emph{respects the actions (of $\Gamma_1$ and $\Gamma_2$)} if it is of either Type~1 or Type~2 respecting the actions $\Gamma_1$ and $\Gamma_2$.

\section{Asymptotic dimension}

In this section, we state the definitions and cite the results that we need regarding the asymptotic dimension.
We keep this section in the general setting of metric spaces instead of restricting it to graphs since the cited results are all for metric spaces.

Let $X$ be a metric space.
A cover $\Vcal$ of~$X$ is \emph{uniformly bounded} if $\sup\{\diam(V) \mid V\in\Vcal\}$ is finite.
The \emph{multiplicity} $\mult(\Vcal)$ of $\Vcal$ of~$X$ is the largest number of elements of~$\Vcal$ that contain a common point of~$X$ or, equivalently, it is the smallest number $n$ such that every $x\in X$ belongs to at most $n$ elements of~$\Vcal$.
A cover $\Ucal$ \emph{refines} $\Vcal$ if for every $U\in\Ucal$ there is a $V\in\Vcal$ with $U\sub V$.

The space $X$ has \emph{asymptotic dimension at most $n$} if for every uniformly bounded open cover $\Vcal$ of~$X$ there is a uniformly bounded open cover $\Ucal$ of~$X$ of multiplicity at most $n+1$ so that $\Vcal$ refines $\Ucal$.
It has \emph{asymptotic dimension} $n$ and we write $\asdim(X)=n$ if it has asymptotic dimension at most $n$ but not at most $n-1$.
A family $\Vcal$ of subsets of a metric space~$X$ is \emph{$r$-disjoint} for $r>0$ if $d(V,V')\geq r$ for all $V\neq V'\in\Vcal$.

\begin{prop}\cite[Theorem 1]{BD-AsdimBedlewo}
Let $X$ be a metric space.
Then $\asdim(X)\leq n$ if and only if for every $r>0$ there exist $r$-disjoint uniformly bounded families $\Vcal_0,\ldots,\Vcal_n$ of subsets of~$X$ such that $\bigcup_{0\leq i\leq n}\Vcal_i$ is a cover of~$X$.\qed
\end{prop}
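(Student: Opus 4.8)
The statement is an equivalence, so the plan is to prove the two implications separately, in each case translating between the paper's refinement definition of $\asdim(X)\le n$ and the existence of coloured $r$-disjoint families. I will prove the ``if'' direction (families imply the dimension bound) by a thickening argument, and the ``only if'' direction by passing through an intermediate cover of bounded multiplicity and large Lebesgue number and then colouring its nerve.

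\textbf{Families $\Rightarrow$ $\asdim(X)\le n$.} Assume that for every $r>0$ there are $r$-disjoint uniformly bounded families $\Vcal_0,\dots,\Vcal_n$ covering $X$. Let $\Wcal$ be any uniformly bounded open cover, with $\sup_{W\in\Wcal}\diam(W)\le D$, and apply the hypothesis with $r>4D$ to obtain such families $\Vcal_0,\dots,\Vcal_n$. Replace each $V$ by its open neighbourhood $N_{2D}(V)=\{x:d(x,V)<2D\}$ and let $\Ucal$ be the union of the $n+1$ thickened families. Within one colour two thickened sets satisfy $d(N_{2D}(V),N_{2D}(V'))\ge d(V,V')-4D\ge r-4D>0$, so they remain disjoint; hence every point lies in at most one member per colour and $\mult(\Ucal)\le n+1$. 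The family $\Ucal$ is open, uniformly bounded, and covers $X$. Finally each $W\in\Wcal$ refines into $\Ucal$: picking $x\in W$ and a set $V$ with $x\in V$, every point of $W$ is within $D<2D$ of $x$, hence within $2D$ of $V$, so $W\sub N_{2D}(V)$. This is exactly the paper's definition of $\asdim(X)\le n$.

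\textbf{$\asdim(X)\le n$ $\Rightarrow$ families.} Fix $r>0$ and a large scale $R$ to be pinned down at the end. Feed the uniformly bounded open cover $\{B(x,R):x\in X\}$ into the definition to get a uniformly bounded open cover $\Ucal$ with $\mult(\Ucal)\le n+1$ that it refines; refinement says precisely that every $R$-ball lies in some member, i.e.\ $\Ucal$ has Lebesgue number at least $R$. Now form the tent functions $\phi_U(x)=d(x,X\sm U)$: each is $1$-Lipschitz, supported in $U$, bounded by $\diam(U)$, at most $n+1$ of them are nonzero at any point, and $\Phi(x):=\sum_U\phi_U(x)\ge R$ by the Lebesgue bound. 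The normalised map $g(x)=(\phi_U(x)/\Phi(x))_U$ sends $X$ into the nerve $N(\Ucal)$, a simplicial complex of dimension at most $n$. Passing to the barycentric subdivision and colouring each vertex (a simplex $\sigma$ of $N$) by $\dim\sigma\in\{0,\dots,n\}$ gives a colouring in which every simplex of the subdivision is a flag, so same-coloured barycentres never share a simplex. Setting $\Vcal_c$ to be the $g$-preimages of the $\epsilon$-shrunk open stars of the colour-$c$ barycentres, one checks that $\Vcal_0,\dots,\Vcal_n$ cover $X$ (every point has a barycentric coordinate $\ge 1/(n+1)>\epsilon$), are uniformly bounded (the preimage of a star of $\widehat\sigma$ lands in $\bigcap_{U\in\sigma}U$), and that members of a fixed $\Vcal_c$ are disjoint.

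\textbf{Expected main obstacle.} The genuine work is upgrading ``disjoint'' to ``$r$-disjoint,'' and this is where the choice of scale $R$ is forced. A direct estimate shows $g$ is Lipschitz with constant $O(n^2/R)$ --- the only denominator entering is $\Phi\ge R$, not any diameter of $\Ucal$ --- while the $\epsilon$-shrunk stars of same-coloured barycentres are separated by some $\delta(n,\epsilon)>0$ in the nerve. Consequently distinct members of $\Vcal_c$ are at distance at least $\delta(n,\epsilon)R/O(n^2)$ in $X$, and choosing $R\ge C(n)\,r$ at the outset makes every family $r$-disjoint. I expect this Lipschitz-versus-separation bookkeeping, rather than the topological colouring itself, to be the delicate point: the barycentric subdivision is only there to produce $n+1$ honest colours from an $n$-dimensional nerve (which is not $(n+1)$-colourable in the naive vertex sense), and the quantitative separation estimate is what makes the whole construction yield the required scale $r$.
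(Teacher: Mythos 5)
The paper does not prove this proposition at all: it is imported verbatim from Bell--Dranishnikov \cite{BD-AsdimBedlewo} (Theorem~1 there) with a \qed\ and used as a black box, so there is no in-paper argument to compare against. Judged on its own merits, your proof is correct and is essentially the canonical argument from that source. The easy direction is exactly right: thickening the $r$-disjoint families by $2D$ with $r>4D$ keeps same-colour members disjoint (distance at least $r-4D>0$), produces an open uniformly bounded cover of multiplicity at most $n+1$, and the $W\sub N_{2D}(V)$ computation gives the required refinement, matching the paper's refinement-style definition of $\asdim$. The hard direction is also the standard one: extract a cover of multiplicity $n+1$ and Lebesgue number at least $R$ from the ball cover $\{B(x,R)\}$, map to the nerve via the normalised tent functions, and colour the barycentric subdivision by simplex dimension (correct: a simplex of the subdivision is a chain of faces, so dimensions within it are distinct). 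Your identification of the real content is also accurate --- the Lipschitz constant of $g$ scales like $n^2/R$ because only $\Phi\geq R$ enters the denominator, so choosing $R$ proportional to $r$ converts the fixed nerve-level separation $\delta(n,\epsilon)$ of the $\epsilon$-shrunk stars into $r$-disjointness in $X$. Two points you should make explicit to be airtight, both routine: fix a metric on the possibly infinite nerve (e.g.\ the $\ell^1$ metric on the geometric realisation, in which barycentric coordinate functions are $1$-Lipschitz, whence two shrunk stars of same-coloured, hence never adjacent, barycentres are at distance at least $\epsilon$), and note that $\epsilon<1/(n+1)$ is needed so the shrunk stars still cover, since a point of an at most $n$-dimensional simplex has some subdivision coordinate at least $1/(n+1)$. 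With those constants pinned down, the argument is complete and agrees with the proof in the cited reference.
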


Let$Y$ be a metric space.
A map $f\colon X\to Y$ is a \emph{coarse equivalence} if there are non-decreasing unbounded functions $\varrho_1,\varrho_2\colon \real_+\cup\{\infty\}\to\real_+\cup\{\infty\}$ such that
\[
\varrho_1(d_X(x,x'))\leq d_Y(f(x),f(x'))\leq \varrho_2(d_X(x,x'))
\]
for all $x,x'\in X$.

By~\cite[Proposition 22]{BD-AsdimGroups}, the asymptotic dimension is invariant under coarse equivalence.
As quasi-isometries are coarse equivalences, we directly have the following.

\begin{prop}\label{prop_AsdimAndQI}
The asymptotic dimension is a \qiy\ invariance.\qed
\end{prop}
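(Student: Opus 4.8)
The proposition combines a cited external result with an elementary observation, so the plan is to reduce quasi-isometry to coarse equivalence and then quote the literature. The substantial part has essentially been supplied for us: by \cite[Proposition~22]{BD-AsdimGroups} the asymptotic dimension is preserved under coarse equivalences. Thus it suffices to verify that every \qiy\ in the sense defined above is a coarse equivalence, after which the claim is immediate.

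First I would take a \qiy\ $f\colon X\to Y$ with constants $\gamma\geq 1$ and $c\geq 0$, that is, one satisfying
\[
\gamma\inv d_X(x,x')-c\leq d_Y(f(x),f(x'))\leq\gamma d_X(x,x')+c
\]
for all $x,x'\in X$, and exhibit the two control functions required in the definition of coarse equivalence. The upper bound is already affine, so I would simply set $\varrho_2(t)=\gamma t+c$, which is non-decreasing and unbounded. For the lower bound the only nuisance is that $\gamma\inv t-c$ may be negative for small~$t$; I would therefore set $\varrho_1(t)=\max\{0,\gamma\inv t-c\}$, which is again non-decreasing and unbounded, and extend both functions to~$\infty$ in the obvious way. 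With these choices the \qiy\ inequalities read exactly as $\varrho_1(d_X(x,x'))\leq d_Y(f(x),f(x'))\leq\varrho_2(d_X(x,x'))$, so $f$ is a coarse equivalence.

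Combining the two steps, any \qiy\ between metric spaces is a coarse equivalence, and hence asymptotic dimension is a \qiy\ invariant by the cited result. The only point requiring a moment's care is the truncation in the definition of~$\varrho_1$, which guarantees that it is a genuine non-decreasing unbounded function on $\real_+\cup\{\infty\}$ rather than a map taking negative values; everything else follows directly from the definitions.
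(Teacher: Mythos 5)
Your proof is correct and follows the same route as the paper: the paper likewise deduces the proposition by citing \cite[Proposition~22]{BD-AsdimGroups} for invariance under coarse equivalence and observing that quasi-isometries are coarse equivalences. You merely spell out the control functions $\varrho_1(t)=\max\{0,\gamma\inv t-c\}$ and $\varrho_2(t)=\gamma t+c$, which the paper leaves implicit.
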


The next lemma says that restricting ourselves to subspaces does not increase the asymptotic dimension.

\begin{lem}\label{lem_AsdimSubset}\cite[Proposition 23]{BD-Asdim}
Let $X$ be a metric space and $Y\sub X$.
Then $\asdim(Y)\leq \asdim(X)$.\qed
\end{lem}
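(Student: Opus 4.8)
The plan is to invoke the Bell--Dranishnikov characterisation stated in the Proposition immediately preceding Lemma~\ref{lem_AsdimSubset}, since it reduces the statement to an elementary restriction argument. Write $n=\asdim(X)$ and note that the metric on $Y$ is by definition the restriction of the metric on $X$, so that $d_Y(A,B)=d_X(A,B)$ for all $A,B\sub Y$. The whole idea is that a good family of subsets of $X$ can simply be intersected with $Y$ to produce a good family of subsets of $Y$.

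First I would fix an arbitrary $r>0$. Since $\asdim(X)\leq n$, the Proposition yields $r$-disjoint uniformly bounded families $\Vcal_0,\ldots,\Vcal_n$ of subsets of~$X$ whose union covers~$X$. The natural candidate for~$Y$ is obtained by intersecting with~$Y$: set $\Ucal_i=\set{V\cap Y\mid V\in\Vcal_i}$ for each $0\leq i\leq n$.

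Then I would verify the three required properties of the $\Ucal_i$. For the covering property, every $y\in Y\sub X$ lies in some $V\in\Vcal_i$, hence in $V\cap Y\in\Ucal_i$, so $\bigcup_{0\leq i\leq n}\Ucal_i$ covers~$Y$. For uniform boundedness, $\diam(V\cap Y)\leq\diam(V)$, so the bound witnessing uniform boundedness of $\Vcal_i$ also bounds $\Ucal_i$. For $r$-disjointness, given distinct members $V\cap Y$ and $V'\cap Y$ of $\Ucal_i$, say with $V\neq V'$ in $\Vcal_i$, we have $V\cap Y\sub V$ and $V'\cap Y\sub V'$, whence $d_Y(V\cap Y,V'\cap Y)=d_X(V\cap Y,V'\cap Y)\geq d_X(V,V')\geq r$. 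Applying the Proposition now in the reverse direction to the families $\Ucal_0,\ldots,\Ucal_n$ gives $\asdim(Y)\leq n=\asdim(X)$.

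There is no genuine obstacle here; the argument is a one-step reduction to the cited characterisation. The only points requiring a moment's care are that distinct sets in $\Vcal_i$ may meet $Y$ in the same subset or in the empty set, but this merely shrinks $\Ucal_i$ and can never create a violation of $r$-disjointness, and that one must work with the induced metric so that distances computed inside~$Y$ coincide with those in~$X$.
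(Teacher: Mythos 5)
Your proof is correct: the paper itself gives no argument for this lemma, citing it directly from Bell--Dranishnikov, and your restriction argument (intersecting the $r$-disjoint uniformly bounded families from the characterisation in the preceding Proposition with $Y$ and checking covering, boundedness, and $r$-disjointness in the induced metric) is exactly the standard proof appearing in the cited source. The minor points you flag -- empty or coinciding intersections and the need for the induced metric -- are handled correctly and are indeed the only places requiring care.
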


A family $(X_i)_{i\in I}$ of subsets of~$X$ satisfies the inequality
$\asdim(X_i) \leq n$ \emph{uniformly} if for every $r > 0$ there exists a $R\in\N$ such that for every $i\in I$ there exist $r$-disjoint families $\Vcal_i^0,\ldots,\Vcal_i^n$ of $R$-bounded subsets of~$X_i$ such that $\bigcup_{0\leq j\leq n}\Vcal_i^j$ is a cover of~$X_i$.

Theorem~\ref{thm_InfinUnionThm} is the Infinite Union Theorem for the asymptotic dimension.

\begin{thm}\label{thm_InfinUnionThm}\cite[Theorem~1]{BD-AsdimGroups}
Let $X$ be a metric space and let $n\in\N$ such that $X = \bigcup_{i\in I} X_i$ for some family $(X_i)_{i\in I}$ with $\asdim(X_i) \leq n$ uniformly for $(X_i)_{i\in I}$.
For every $r > 0$, let $Y_r \sub X$ with $\asdim(Y_r) \leq n$ such that $d(X_i \sm Y_r,X_j \sm Y_r) \geq r$ for all $X_i \neq X_j$.
Then $\asdim(X) \leq n$.\qed
\end{thm}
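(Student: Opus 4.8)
The plan is to verify the $r$-disjoint-families criterion for $\asdim$ (the characterisation \cite[Theorem 1]{BD-AsdimBedlewo} recalled above): it suffices to show that for every $r>0$ there are $r$-disjoint, uniformly bounded families $\Ucal_0,\dots,\Ucal_n$ of subsets of $X$ whose union covers $X$. I would fix $r$ once and for all and build these families by hand, using the scale-dependent set $Y_{r'}$ for a radius $r'\geq r$ chosen large (depending on $r$ and on the uniform bound). The overall shape is the two-piece decomposition $X = Y_{r'}\cup(X\sm Y_{r'})$ together with the fact that, after deleting $Y_{r'}$, the pieces $X_i\sm Y_{r'}$ are pairwise $r'$-far apart.

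The first and easy step treats the separated part $X\sm Y_{r'}=\bigcup_i (X_i\sm Y_{r'})$. By the uniform hypothesis there is $R=R(r)$ such that each $X_i$ carries $r$-disjoint, $R$-bounded families $\Vcal_i^0,\dots,\Vcal_i^n$ covering it; restricting these to the subspace $X_i\sm Y_{r'}$ keeps them $r$-disjoint and $R$-bounded (uniformity passes to subspaces, cf.\ Lemma~\ref{lem_AsdimSubset}). I then set $\Vcal^j=\bigcup_i\{\,V\cap(X_i\sm Y_{r'}) : V\in\Vcal_i^j\,\}$. Within one index $i$ the family is $r$-disjoint by construction, while sets coming from different indices $i\neq i'$ lie in $X_i\sm Y_{r'}$ and $X_{i'}\sm Y_{r'}$ and hence are at distance $\geq r'\geq r$; so each $\Vcal^j$ is globally $r$-disjoint and $R$-bounded, and $\bigcup_j\Vcal^j$ covers $X\sm Y_{r'}$. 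This step works precisely because removing $Y_{r'}$ buys the separation that lets the colour classes be amalgamated over the infinite index set.

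The second step brings in $Y_{r'}$. Since $\asdim Y_{r'}\leq n$, the same criterion gives $r$-disjoint, bounded families $\mathcal B^0,\dots,\mathcal B^n$ covering $Y_{r'}$, and it remains to fold $\{\mathcal B^j\}$ and $\{\Vcal^j\}$ into a single system $\Ucal_0,\dots,\Ucal_n$ of $n+1$ colours that is still $r$-disjoint and covers $X$. This is exactly the situation of the Finite Union Theorem for asymptotic dimension \cite{BD-AsdimGroups}, and I would invoke an effective, scale-tracking form of it: two subsets that each admit a $\rho$-disjoint bounded $(n+1)$-colouring for sufficiently large $\rho$ admit a common $r$-disjoint bounded $(n+1)$-colouring of their union. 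Feeding $A=Y_{r'}$ and $B=X\sm Y_{r'}$ into this — and choosing $r'$, together with the scale used for $Y_{r'}$, large enough for the constants to close — produces the required $\Ucal_0,\dots,\Ucal_n$, whence the criterion yields $\asdim X\leq n$.

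The main obstacle is this last merge, not the amalgamation of the pieces. The difficulty is the interface between $Y_{r'}$ and $X\sm Y_{r'}$: a set of $\mathcal B^j$ lying in $Y_{r'}$ and a set of $\Vcal^j$ lying just outside it can be arbitrarily close, so one cannot simply take $\Ucal_j=\mathcal B^j\cup\Vcal^j$. Separating two \emph{monochromatic blobs} by an $r$-buffer is impossible without leaving an uncovered collar of width $r$, and pushing the buffer outwards only shifts the gap rather than closing it. The honest resolution is to enlarge the $Y_{r'}$-cover (computed at a large scale, so that it remains $r$-disjoint after thickening) until it absorbs a collar, while trimming the piece-cover away from $Y_{r'}$; the large separation $r'$ of the pieces is what makes each local interface controllable and lets the two trimmed systems be amalgamated colour by colour. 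I would also stress that the decomposition must be redone at every scale: for a fixed $r'$ the set $X\sm Y_{r'}$ need not have asymptotic dimension $\leq n$ at large scales — an $r'$-disjoint union of points can already have large asymptotic dimension — so the scale-dependence of $Y_r$ in the hypothesis is essential and forbids a one-shot application of the Finite Union Theorem to a single decomposition.
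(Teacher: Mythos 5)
The paper offers no proof of this statement at all --- it is quoted with a \verb|\qed| as \cite[Theorem~1]{BD-AsdimGroups} --- so the relevant comparison is with Bell--Dranishnikov's original argument, which your proposal reconstructs correctly and in essentially the same way: amalgamate the restricted covers of the pieces $X_i\sm Y_{r'}$ colour by colour using the $r'$-separation, then merge with a cover of $Y_{r'}$ taken at a sufficiently large scale via the absorb-a-collar-and-trim mechanism (which is exactly the content of the ``effective Finite Union Theorem'' you invoke, and which your last paragraph in effect proves). Your closing observation --- that the decomposition must be redone at every scale because $X\sm Y_{r'}$ for fixed $r'$ can have large asymptotic dimension at large scales --- is also correct and is precisely why the hypothesis supplies a separate $Y_r$ for each $r$.
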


A consequence of Theorem~\ref{thm_InfinUnionThm} is the Finite Union Theorem, Theorem~\ref{thm_FinUnionThm}.

\begin{thm}\label{thm_FinUnionThm}\cite[Finite Union Theorem]{BD-AsdimGroups}
Let $X=A\cup B$ be a metric space.
Then $\asdim(X) \leq \max\{\asdim(A),\asdim(B)\}$.\qed
\end{thm}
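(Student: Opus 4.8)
The plan is to deduce the statement from the Infinite Union Theorem (Theorem~\ref{thm_InfinUnionThm}) applied to a two-element family. Write $n=\max\{\asdim(A),\asdim(B)\}$, take the index set $I=\{1,2\}$, and set $X_1=A$ and $X_2=B$, so that $X=X_1\cup X_2$ and it suffices to verify the two hypotheses of that theorem for this $n$.

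First I would dispatch the uniformity hypothesis, which is immediate because the family is finite. Given $r>0$, choose for $A$ and for $B$ separately the $r$-disjoint families of bounded sets witnessing $\asdim\le n$, say with bounds $R_A$ and $R_B$; then $R=\max\{R_A,R_B\}$ works for both. Hence $\asdim(X_i)\le n$ holds uniformly over $i\in I$.

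The content of the argument lies in the second hypothesis: for each $r>0$ I must produce a set $Y_r\sub X$ with $\asdim(Y_r)\le n$ and with $d(X_1\sm Y_r,X_2\sm Y_r)\ge r$. I would take
\[
Y_r=\{x\in A\mid d(x,B)<r\}.
\]
Since $Y_r\sub A$, Lemma~\ref{lem_AsdimSubset} gives $\asdim(Y_r)\le\asdim(A)\le n$ at once. For the separation, observe that $A\sm Y_r=\{x\in A\mid d(x,B)\ge r\}$, whereas $B\sm Y_r\sub B$; thus any $a\in A\sm Y_r$ and $b\in B\sm Y_r$ satisfy $d(a,b)\ge d(a,B)\ge r$, so indeed $d(X_1\sm Y_r,X_2\sm Y_r)\ge r$.

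With both hypotheses in hand, Theorem~\ref{thm_InfinUnionThm} yields $\asdim(X)\le n$, as claimed. The only genuinely content-bearing choice is that of $Y_r$, and I expect the single point requiring care to be the strictness of the inequality defining it: using $d(x,B)<r$ (rather than $\le r$) is exactly what makes the leftover pieces separated by $\ge r$. Everything else is routine bookkeeping around subspace monotonicity and the two-set reduction.
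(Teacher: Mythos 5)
Your proof is correct and follows exactly the route the paper itself indicates: the paper states this theorem without proof (citing Bell--Dranishnikov) but explicitly notes it is a consequence of the Infinite Union Theorem, and your derivation---the two-element family $X_1=A$, $X_2=B$, uniformity trivially from finiteness, and the neighbourhood $Y_r=\{x\in A\mid d(x,B)<r\}$ with subspace monotonicity---is the standard instantiation of that reduction. All three hypotheses are verified correctly, so nothing further is needed.
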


Let $\Ucal$ be a covering of a metric space~$X$.
We denote by
\[
L(\Ucal) := \inf_{U\in\Ucal}\{\sup_{x\in X}\{d(x,X\sm U)\}\}
\]
the \emph{Lebesgue number} of~$\Ucal$.

Let $r>0$, $d>0$ and $n\in\N$.
We write $(r, d) - \dim(X) \leq n$ if there exists a $d$-bounded cover $\Vcal$ of $X$ with $\mult(\Vcal)\leq n + 1$ and with $L(\Vcal) > r$.
We call such a cover an \emph{$(r, d)$-cover} of~$X$.

\begin{prop}\label{prop_AsdimRDCover}\cite[Proposition 2.1]{D-AsdimAmalgamCoxeter}
Let $X$ be a metric space.
Then $X$ has symptotic dimension at most~$n$ if and only if there exists a function
$d\colon\real_{>0}\to\real_{>0}$ such that $(r,d(r))-\dim(X)\leq  n$ for all $r > 0$.\qed
\end{prop}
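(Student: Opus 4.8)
The plan is to read off both characterisations from the definitions and verify the two implications directly; the only genuinely technical point is producing an \emph{open} cover in the reverse direction, which I would handle by a shrinking argument. Throughout I use that $L(\Vcal)>r$ says exactly that for every $x\in X$ there is a member $V\in\Vcal$ with $B(x,r)\sub V$ (equivalently $d(x,X\sm V)>r$), so that the Lebesgue-number condition is precisely a statement about $r$-balls being swallowed by members.

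For the forward implication, assume $\asdim(X)\le n$ and fix $r>0$. I would feed the uniformly bounded open cover $\Vcal_r=\{B(x,r+1)\mid x\in X\}$ into the definition of asymptotic dimension, obtaining a uniformly bounded open cover $\Ucal$ with $\mult(\Ucal)\le n+1$ such that $\Vcal_r$ refines $\Ucal$. Since each ball $B(x,r+1)$ then lies in some member of $\Ucal$, we get $d(x,X\sm U)\ge r+1$ for that member, hence $L(\Ucal)\ge r+1>r$; moreover $\Ucal$ is $d(r)$-bounded with $d(r):=\sup_{U\in\Ucal}\diam(U)<\infty$. Thus $\Ucal$ is an $(r,d(r))$-cover, so $(r,d(r))-\dim(X)\le n$, and letting $r$ range over $\real_{>0}$ defines the required function $d$.

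For the reverse implication, assume such a $d$ exists and let $\Wcal$ be an arbitrary uniformly bounded open cover, say with $\diam(W)\le s$ for all $W\in\Wcal$. I would set $r=s+1$ and take an $(r,d(r))$-cover $\Vcal$, which is $d(r)$-bounded, has $\mult(\Vcal)\le n+1$, and satisfies $L(\Vcal)>r$. To obtain an open cover I replace each $V\in\Vcal$ by the open superlevel set $V':=\{x\mid d(x,X\sm V)>\varepsilon\}$ of the $1$-Lipschitz function $d(\cdot,X\sm V)$, for a fixed $\varepsilon\in(0,\tfrac12)$. Then $\Vcal'=\{V'\mid V\in\Vcal\}$ is open; it is still $d(r)$-bounded since $V'\sub V$; it has $\mult(\Vcal')\le\mult(\Vcal)\le n+1$ because passing to subsets can only reduce overlaps; it still covers $X$ (any $x$ lies in some $V$ with $d(x,X\sm V)>r>\varepsilon$, so $x\in V'$); and its Lebesgue number drops by at most $2\varepsilon$, giving $L(\Vcal')\ge r-2\varepsilon>s$. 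Since each $W\in\Wcal$ has diameter $\le s<r-2\varepsilon$, choosing $x\in W$ yields a member $V'$ with $W\sub B(x,r-2\varepsilon)\sub V'$, so $\Wcal$ refines $\Vcal'$. Hence $\Vcal'$ witnesses $\asdim(X)\le n$.

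The step I expect to be the main obstacle is exactly the open-cover requirement in the reverse direction, together with bookkeeping the strict inequalities on Lebesgue numbers; the shrinking device resolves both simultaneously, since passing to superlevel sets of distance functions produces open sets while only shrinking members (never raising multiplicity) at the cost of an arbitrarily small loss in Lebesgue number, which is absorbed by choosing $r$ slightly larger than $s$.
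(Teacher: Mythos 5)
Your proof is correct. Note that the paper itself gives no argument for this proposition --- it is quoted from Dranishnikov with a citation --- so the natural comparison is with the standard route, which factors through the equivalent characterisation of $\asdim(X)\leq n$ by $r$-disjoint uniformly bounded families (the proposition cited from Bell--Dranishnikov earlier in the same section): there one fattens the members of the $r$-disjoint families by $r/3$-neighbourhoods to obtain a cover of multiplicity at most $n+1$ and Lebesgue number comparable to $r$, and conversely extracts disjoint coloured families from an $(r,d)$-cover. You instead work directly with the refinement definition of asymptotic dimension, and both implications go through: in the forward direction feeding the ball cover $\{B(x,r+1)\mid x\in X\}$ into the definition yields $L(\Ucal)\geq r+1>r$ as you say, and in the reverse direction your superlevel-set shrinking $V':=\{x\mid d(x,X\sm V)>\varepsilon\}$ correctly preserves boundedness and multiplicity (since $V'\sub V$), produces open sets, and loses at most $2\varepsilon$ of Lebesgue number, so the bookkeeping $L(\Vcal')\geq r-2\varepsilon>s$ and $W\sub B(x,r-2\varepsilon)\sub V'$ is sound. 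Your direct argument is arguably more elementary than the coloured-families route, at the cost of being tied to the specific refinement formulation of $\asdim$; the disjoint-families route is what one wants when the families themselves are used later (as they are elsewhere in this paper).

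One caveat you should make explicit rather than leave implicit: you read $L(\Vcal)>r$ as ``every $r$-ball is swallowed by some member,'' i.e.\ you use $L(\Vcal)=\inf_{x\in X}\sup_{V\in\Vcal}d(x,X\sm V)$. The formula displayed in the paper has the infimum over $V\in\Vcal$ and the supremum over $x\in X$, which literally computes the infimum of the inradii of the members --- a genuinely different quantity, under which the proposition is \emph{false}: on $X=\real$ the disjoint cover by the intervals $[4rk,4r(k+1))$, $k\in\Z$, is $4r$-bounded, has multiplicity $1$, and every member has inradius $2r>r$, which would give $(r,4r)-\dim(\real)\leq 0$ for all $r>0$ even though $\asdim(\real)=1$. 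So the paper's displayed definition transposes $\inf$ and $\sup$ relative to Dranishnikov's, your reading is the intended one, and flagging this correction would make your proof self-contained instead of resting on an unannounced reinterpretation. A trivial further nit: your opening parenthetical equates $B(x,r)\sub V$ with $d(x,X\sm V)>r$, conflating $\geq$ and $>$, but since you always operate with strict slack ($r+1$, $r-2\varepsilon$) this has no effect on the argument.
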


For a subset $A$ of~$X$ we denote by $\rand A$ its boundary and by $\Int(A)$ its interior, i.\,e.\ $A\sm\rand A$.
A \emph{partition} of a metric space $X$ is a presentation as a union $X=\bigcup_{i\in I} W_i$ such that $Int(W_i)\cap Int(W_j)=\es$ for all $i\neq j$.

The last result that we need for our proof of Theorem~\ref{thm_main} is the Partition Theorem, Theorem~\ref{thm_PartThm}.

\begin{thm}\label{thm_PartThm}\cite[Theorem 2.10]{D-AsdimAmalgamCoxeter} Let $X$ be a geodesic metric space and let $n\in\N$.
If for every $R > 0$ there exists $d > 0$ and a cover $\Vcal$ of~$X$ with $\Int(U)\cap \Int(V)=\es$ for all $U,V\in\Vcal$, with $\asdim(V) \leq n$ uniformly for~$\Vcal$ and such that $(R, d) - \dim(\bigcup_\Vcal\rand V) \leq n - 1$, where $\rand V$ is taken with the metric restricted from~$X$, then $\asdim(X)\leq n$.\qed
\end{thm}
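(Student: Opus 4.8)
The plan is to verify the single-scale criterion of Proposition~\ref{prop_AsdimRDCover}: it suffices to produce, for every $r>0$, a real number $D=D(r)>0$ and an $(r,D)$-cover of~$X$, that is, a $D$-bounded cover of multiplicity at most $n+1$ whose Lebesgue number exceeds~$r$. So I would fix $r>0$ and apply the hypothesis with a radius $R$ chosen large relative to~$r$ (say $R\geq 10r$). This yields $d>0$, a partition $\Vcal$ of~$X$ with $\asdim(V)\leq n$ uniformly for~$\Vcal$, the boundary set $Z:=\bigcup_{V\in\Vcal}\rand V$, and a $d$-bounded cover $\Wcal$ of~$Z$ with $\mult(\Wcal)\leq n$ and $L(\Wcal)>R$. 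Throughout write $N_t(A):=\{x\in X : d(x,A)\leq t\}$; operationally, $L>r$ means that every $r$-ball about the relevant points lies inside a single member of the cover.

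The construction has two ingredients. First, I would push $\Wcal$ off of~$Z$ into a bounded collar around~$Z$ without raising its multiplicity. Using that $X$ is geodesic, fix an approximate nearest-point projection $p\colon N_{4r}(Z)\to Z$ and set $W^+:=p^{-1}(W)$ for $W\in\Wcal$. Then $\Wcal^+:=\{W^+\mid W\in\Wcal\}$ covers $N_{4r}(Z)$, is $(d+8r)$-bounded, and still satisfies $\mult(\Wcal^+)\leq n$, since a point and its image under~$p$ lie in the same members of the two families. Because $L(\Wcal)>R\gg r$, the projections of all points of an $r$-ball about a point of $N_{2r}(Z)$ stay inside a single $R$-ball of~$Z$, hence inside a single~$W$; this gives $\Wcal^+$ Lebesgue number greater than~$r$ at every point of $N_{2r}(Z)$. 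Second, I would invoke uniform $\asdim(V)\leq n$ at a scale comparable to~$r$ to obtain, for each piece~$V$, a cover $\Ucal_V$ of~$V$ of multiplicity at most $n+1$ and Lebesgue number greater than~$r$, with a diameter bound $D_1$ uniform in~$V$ (converting the $r$-disjoint families furnished by the definition of uniform asymptotic dimension into such a cover by thickening each set a little).

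The geometric fact that makes the pieces combine well is separation of their deep interiors. Setting $\mathring V:=\{x\in V : d(x,Z)>2r\}$, any geodesic in~$X$ between points of $\mathring V$ and $\mathring{V'}$ with $V\neq V'$ must cross~$Z$ (a path leaving the interior of~$V$ meets $\rand V\sub Z$), so the deep interiors of distinct pieces are more than $4r$ apart. Consequently the restricted family $\cP:=\bigcup_{V\in\Vcal}\{U\cap\mathring V\mid U\in\Ucal_V\}$ has multiplicity at most $n+1$ \emph{globally}, not merely piece by piece, because each point lies in the deep interior of at most one piece. Thus I would take the cover of~$X$ to be $\Wcal^+$ together with $\cP$, suitably merged near~$Z$; coverage is clear, since $N_{2r}(Z)$ is covered by $\Wcal^+$ and its complement (a union of deep interiors) by $\cP$, and the Lebesgue number is at least~$r$, since $\Wcal^+$ supplies it on $N_{2r}(Z)$ and $\cP$ supplies it where $d(\cdot,Z)$ is large.

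The main obstacle is the multiplicity bound in the collar where both families are active: a priori a point with $d(\cdot,Z)$ near~$2r$ could meet up to~$n$ members of $\Wcal^+$ and up to $n+1$ members of~$\cP$, giving $2n+1$. Resolving this is exactly where the hypothesis $(R,d)-\dim(Z)\leq n-1$ — equivalently $\mult(\Wcal)\leq n$ rather than $n+1$ — must be spent. The remedy, in the spirit of the proof of the Finite Union Theorem (Theorem~\ref{thm_FinUnionThm}), is not to take a naive union but to absorb the collar-fragments of the members of~$\cP$ into the members of $\Wcal^+$ along the projection~$p$: each fragment is assigned to the set $W^+$ lying over its projection, so that inside the collar the multiplicity stays at most~$n$ while the absorbed fragments extend the boundary sets just far enough outward to preserve the Lebesgue number across the transition, and outside the collar $\cP$ contributes multiplicity at most $n+1$. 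The delicate points to control are that only boundedly many fragments are absorbed into any one $W^+$ (so diameters stay bounded by some $D=D(r)$) and that, after absorption, every point still retains an $r$-ball inside a single member; carrying out this bookkeeping, with the one extra unit of multiplicity coming precisely from the dimension drop of~$Z$, is the technical heart of the argument. Once such an $(r,D)$-cover is in hand for every~$r$, Proposition~\ref{prop_AsdimRDCover} yields $\asdim(X)\leq n$.
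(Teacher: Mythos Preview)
The paper does not prove this statement at all: Theorem~\ref{thm_PartThm} is quoted verbatim from \cite[Theorem~2.10]{D-AsdimAmalgamCoxeter} and closed with a \qedsymbol\ immediately after the statement, so there is no in-paper proof to compare your proposal against. Your sketch is a reasonable outline of Dranishnikov's original argument --- reduce via Proposition~\ref{prop_AsdimRDCover} to building an $(r,D)$-cover, thicken the $(R,d)$-cover of the boundary $Z$ by a nearest-point retraction onto a collar, cover the deep interiors of the pieces using the uniform bound, and then merge the two families so that the extra unit of multiplicity available on $Z$ absorbs the overlap --- but as written it remains a sketch: the absorption step (``only boundedly many fragments are absorbed into any one $W^+$'' and the preservation of the Lebesgue number across the transition zone) is asserted rather than carried out, and in Dranishnikov's paper this is precisely the place where the careful bookkeeping lives. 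If your goal is to reproduce the cited proof, you would need to make that merging step explicit; if your goal is to compare with the present paper, there is nothing to compare, since the authors simply invoke the result.
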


\section{Proof of Theorem~\ref{thm_main}}

In this section, we will prove our main result, Theorem~\ref{thm_main}.
Before we do that, we need some notations and we prove a lemma.

Let $G_1\ast G_2$ be a \ta\ of locally finite \qt\ connected graphs over the connecting tree~$T$.
Let $t\in V(T)$.
For every $m\in\N$, let $O_m$ be the subgraph of $G+H$ induced by the vertex set
\[
\{x\in V(G_1^u), y\in V(G_2^v)\mid d(t,u)=m, d(t,v)=m\}
\]
and let $Q_m$ be the subgraph of $G+H$ induced by the vertex set
\[
\bigcup_{n\leq m} V(O_n).
\]

\begin{lem}\label{lem_T-Prop2.8}
Let $G_1$ and $G_2$ be \qt\ locally finite graphs with asymptotic dimension at most~$n$.
Then the following holds.
\begin{enumerate}[\rm (i)]
\item\label{itm_T-Prop2.8_1} For every $m\in\N$ we have $\asdim(O_m)\leq n$.
\item\label{itm_T-Prop2.8_2} For every $m\in\N$ we have $\asdim(Q_m)\leq n$.
\end{enumerate}
\end{lem}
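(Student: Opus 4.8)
The plan is to read both statements off the description of $G_1+G_2$ as a tree of copies of $G_1$ and $G_2$ glued along adhesion sets, exploiting that the connecting tree $T$ is bipartite. The key structural observation is this: since $T$ is bipartite, two nodes $u,v$ with $d(t,u)=d(t,v)=m$ have the same parity of distance from $t$ and are therefore non-adjacent in $T$. Consequently $G_1+G_2$ contains no bonding edges joining two distinct level-$m$ copies, so the induced subgraph $O_m$ is the disjoint union of the copies $G_u$ with $d(t,u)=m$. As a graph, each such $G_u$ is an isometric copy of $G_1$ or of $G_2$, whence $\asdim(G_u)\le n$; since only these two isometry types occur, this bound holds \emph{uniformly} over all level-$m$ copies.

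Statement (i) then follows quickly. In the graph metric of $O_m$ distinct components lie at infinite distance, so any two of them are $r$-separated for every $r$. Applying the Infinite Union Theorem (Theorem~\ref{thm_InfinUnionThm}) to the family $(G_u)_{d(t,u)=m}$, with the uniform bound just recorded and with $Y_r=\es$, yields $\asdim(O_m)\le n$.

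For statement (ii) I would induct on $m$. The base case $Q_0=G_t$ is a single copy of $G_1$ or $G_2$, so $\asdim(Q_0)\le n$. For the inductive step I write $Q_m=Q_{m-1}\cup O_m$ and invoke the Finite Union Theorem (Theorem~\ref{thm_FinUnionThm}), which gives $\asdim(Q_m)\le\max\{\asdim(Q_{m-1}),\asdim(O_m)\}$. By the induction hypothesis together with statement (i), both terms are at most $n$ — provided the two pieces still satisfy these bounds with respect to the metric they inherit from $Q_m$.

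This last caveat is exactly where I expect the real work to lie. The Finite Union Theorem measures $Q_{m-1}$ and $O_m$ with the metric restricted from $Q_m$, whereas the induction hypothesis and statement (i) are phrased for the intrinsic graph metrics, and these genuinely differ: a $Q_m$-geodesic between two vertices of $Q_{m-1}$ may take a shortcut through a level-$m$ child, and two sibling copies inside $O_m$ that are infinitely far apart intrinsically become close in $Q_m$. I would resolve this using the pendant structure inherited from $T$: within $Q_m$ each level-$m$ copy $G_u$ is attached to the rest of $Q_m$ only through the single adhesion set linking it to its unique parent at level $m-1$. Hence every excursion of a $Q_m$-geodesic into the level-$m$ copies enters and leaves each $G_u$ through one adhesion set, which allows a controlled comparison between the restricted and the intrinsic metrics and lets one transfer the asymptotic-dimension bounds to the subspaces. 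Carrying out this metric comparison carefully — rather than the routine bookkeeping of the union theorems — is the main obstacle.
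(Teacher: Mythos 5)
There is a genuine gap, and it sits exactly where you located the ``main obstacle'' --- but your proposed repair cannot work. The lemma is needed, and is proved in the paper, for $O_m$ and $Q_m$ equipped with the metric inherited from $G_1+G_2$: that is the metric in which the lemma is later applied (via Lemma~\ref{lem_AsdimSubset} and the Partition Theorem, whose pieces $W$ and boundaries $\rand W$ carry the metric restricted from~$H$). Your proof of (i) establishes the bound only for the intrinsic graph metric of~$O_m$, where it is essentially vacuous: the components are at infinite distance, so Theorem~\ref{thm_InfinUnionThm} with $Y_r=\es$ applies trivially. In the inherited metric, however, two sibling copies at level~$m$ come within \emph{bounded} distance of each other through their common parent at level $m-1$ (near its adhesion sets), so the family of level-$m$ copies is not $r$-disjoint for any large~$r$ and the choice $Y_r=\es$ fails. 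You acknowledge this mismatch in (ii), but the fix you sketch --- a ``controlled comparison between the restricted and the intrinsic metrics'' --- does not exist: the identity map between the two metrics on $O_m$ is not a coarse equivalence (pairs at infinite intrinsic distance lie at bounded restricted distance), and asymptotic dimension is not monotone under shrinking the metric, so no comparison argument can transfer your intrinsic bound to the restricted one.

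The missing idea is the paper's induction on~$m$, carried out entirely in the inherited metric. Given $r>0$, let $U_m$ be the set of vertices of~$O_m$ at distance at most~$r$ from~$O_{m-1}$. Using quasi-transitivity, together with the fact that in each level-$(m-1)$ copy every adhesion set except the single one pointing towards~$t$ is adjacent to a level-$m$ copy, one shows that $U_m$ is quasi-isometric to~$O_{m-1}$; hence $\asdim(U_m)\leq n$ by the induction hypothesis and Proposition~\ref{prop_AsdimAndQI}. Your observation that any ambient path between distinct level-$m$ copies must pass through the parent copy $G_{3-j}^w$ is then used not for a metric comparison but for separation: it shows $d\bigl(V(G_j^u)\sm V(U_m),\,V(G_j^v)\sm V(U_m)\bigr)>r$, so Theorem~\ref{thm_InfinUnionThm} with $Y_r=U_m$ yields $\asdim(O_m)\leq n$ in the restricted metric. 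Once (i) is available in this metric, your inductive step for (ii) via Theorem~\ref{thm_FinUnionThm} goes through exactly as in the paper. In short: your tree-level bookkeeping and your ``excursions through one adhesion set'' intuition point in the right direction, but the actual content of the lemma is the $Y_r=U_m$ induction, which your proposal replaces by an unworkable placeholder.
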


\begin{proof}
We prove (\ref{itm_T-Prop2.8_1}) inductively.
Since $G_1$ and $G_2$ have asymptotic dimension at most~$n$, it follows from the definition that each $G_i^u$ with the metric inherited by $G_1+G_2$ has asymptotic dimension at most~$n$ and that the family of all $G_i^u$ satisfies $\asdim(G_i^u)$ uniformly.

Since $O_0=G_i^t$, we have $\asdim(G_i^{t})\leq n$.
We are going to apply Theorem~\ref{thm_InfinUnionThm}.
Let $r>0$ and let $U_m$ be the graph induced by the vertices in~$O_m$ of distance at most $r$ to $O_{m-1}$.
Since $G_i$ is \qt\ and there is a unique adhesion set in each $G_j^u$ with $d(t,u)=m-1$ that is not adjacent to~$O_m$, we conclude that $O_{m-1}$ is either empty or \qi\ to~$U_m$.
In particular, we have $\asdim(U_m)\leq n$ by induction and by Proposition~\ref{prop_AsdimAndQI}.

Let $u,v$ be distinct vertices of~$T$ of distance $m$ to~$t$.
Any path $P$ from $V(G_j^u)\sm V(U_m)$ to $V(G_j^v)\sm V(U_m)$ must pass through $G_{3-j}^w$, where $w$ is the neighbour of~$u$ on the unique $t$-$u$ path in~$T$.
Thus, $P$ has length at least $d(V(G_j^u)\sm V(U_m), V(G_{3-j}^w))>r$ and hence Theorem~\ref{thm_InfinUnionThm} implies $\asdim(O_m)\leq n$.

By Theorem~\ref{thm_FinUnionThm}, $\asdim(Q_m)\leq n$ follows directly from $\asdim(O_m)\leq n$.
\end{proof}

Now we are ready to prove our main theorem.

\begin{proof}[Proof of Theorem~\ref{thm_main}]
By Remark~\ref{rem_QI+to*} and Proposition~\ref{prop_AsdimAndQI} it suffices to prove the assertion for $H:=G_1+G_2$ instead of~$G$.
Let $T$ be the connecting tree of our \ta.
Let $t_1t_2\in E(T)$ such that the graph associated to $t_i$ is~$G_i$.
Let $\Scal_i$ be a set of representatives of the orbits of adhesion sets in~$G_i^{t_i}$ under the action of $\Gamma_i^{t_i}$ and let $\Scal_i^u$ be its image in~$G_i^u$.
Let
\[
n:=\max\{\asdim(G_1),\asdim(G_2),\asdim(S)+1\mid S\in \Scal_1\cup\Scal_2\}.
\]
Let $\pi_T\colon V(H)\to V(T)$ be the canonical map that maps $x\in G_i^u$ to~$u$.
For $t\in V(T)$ let $T^t$ be the subtree induced by all $t'\in V(T)$ that are separated by~$t$ from~$t_1$.
Let $r,R\in\N$ such that $r>4R$ and $r$ is even.

Let $U_r$ be the subgraph of~$H$ induced by
\begin{align*}
&\left(\pi_T\inv(B_{r-1}(t_1))\cap \{v\in V(H)\mid d(v,\bigcup\Scal_1^{t_1})\geq R\}\right)\\
\cup&\bigcup\left\{ \left(B_r\left(\bigcup\Scal_j^v\right)\cap G_j^v\right)\mid d(v,t_1)=r\right\}
\end{align*}
and set
\[
M_R:=\{v\in V(H)\mid d(v,\bigcup\Scal_1^{t_1})=R\}.
\]
Let $W_r$ be $U_r$ without those edges that have both their incident vertices in~$M_R$.

Let us extend the definitions of $W_r$ and $M_r$ to all vertices $t$ of~$T$ of even distance to~$t_1$: we set
\[
W_r^t:=f_t(W_r)\cap H[\pi_T\inv(V(T^t))]
\]
and
\[
M_R^t:=f_t(M_R)\cap\pi_T\inv(V(T^t)),
\]
where $H[\pi_T\inv(V(T^t))]$ is the subgraph of~$H$ induced by $\pi_T\inv(V(T^t))$ and where $f_t\in\Aut(H)$ maps $t_1$ to~$t$ such that the adhesion set separating $G_1^t$ from $G_1^{t_1}$ lies in $f_t(\Scal_1)$.
Note that this definition does not depend on the particular choice of~$f_t$.

We consider the set
\[
\Wcal:=\{W_r^t\mid d(t,t_1)\in r\N\}\cup\{W^0\},
\]
where $W^0$ is the graph induced by $B_R(\bigcup\Scal_1^{t_1})$.
Our aim is to show that we can apply Theorem~\ref{thm_PartThm} for the set~$\Wcal$.
It follows directly from its construction that $\Wcal$ is a cover of~$H$.
The elements of~$\Wcal$ are edge-disjoint by construction but they may share vertices that lie in $M_R$ or its images~$M_R^t$.
Thus, $\Int(U)\cap\Int(W)=\es$ for all $U,W\in\Wcal$.

Since $W^0$ is \qi\ to~$\bigcup\Scal_1^{t_1}$, their asymptotic dimensions coincide.
By Theorem~\ref{thm_FinUnionThm}, we conclude that it is at most~$n-1$.
Thus, in order to show that $\asdim(W)\leq n-1$ uniformly for $\Wcal$, it suffices to show it for $\Wcal\sm\{W^0\}$.
But this follows from Lemmas~\ref{lem_AsdimSubset} and~\ref{lem_T-Prop2.8} since $W_r\sub Q_r$ and $W_r^t$ is isomorphic to a subgraph of~$W_r$.

Let us prove $(R,d)-dim(Z)\leq n-1$ for $Z:=\bigcup_{W\in \Wcal}\rand W$.
We have
\[
Z=\bigcup\left\{\rand M_R^t\mid t\in V(T), d(t,t_1)\in r\N\right\}.
\]
Note that $M_R$ is \qi\ to $\bigcup\Scal_1$.
Since $\Scal_1$ has at most two elements each of which has asymptotic dimension at most $n-1$, we conclude $\asdim(\bigcup\Scal_1)\leq n-1$ by Theorem~\ref{thm_FinUnionThm}.
Thus, we have $\asdim(M_R)\leq n-1$.
By Proposition~\ref{prop_AsdimRDCover}, there exists a $d>0$ and an $(R,d)$-cover $\Ucal$ of~$M_R$ with multiplicity at most~$n$.

Set $\Vcal:=\Ucal\cup\bigcup\{g_t(U)\cap M_R^t\mid U\in\Ucal, d(t,t_1)\in r\N_{>0}\}$, where $g_t$ is an automorphism of~$H$ that maps $G_1^{t_1}$ to $G_1^t$.
Obviously, $\Vcal$ is a $d$-bounded cover of~$Z$.
Since the sets $M_R^t$ are pairwise disjoint, the multiplicity of~$\Vcal$ is at most $n-1$.

To prove $(R,d)-dim(Z)\leq n-1$, it remains to prove $L(\Vcal)>R$.
For that, we just have to show $d(M_R^t,M_R^{t'})>R$ for all $t,t'\in V(T)$ with $d(t,t_1),d(t',t_1)\in r\N$.
Let $v$ be the vertex on the $t$-$t'$ path in $T$ closest to~$t_1$.
If $v$ is distinct from $t$ and~$t'$, then every path with one end vertex in $M_R^t$ and the other in $M_R^{t'}$ must pass through $G_j^v$.
So we have $d(M_R^{t'},M_R^t)> d(G_j^v,M_R^t)>R$.
If $v$ is not distinct from $t$ and $t'$, the let us assume $v=t$.
Let $S_t$, $S_{t'}$ be the adhesion set in $G_1^t$, in $G_1^{t'}$ separating $G_1^{t_1}$ from $G_1^t$, from $G_1^{t'}$, respectively.
Let $w$ be the neighbour of~$t$ on the unique $t$-$t'$ path and let $S_v$ be the adhesion set in $G_2^v$ separating $G_2^v$ from $G_1^t$.
Then $d(M_R^t,M_R^{t'})\geq d(S_v,S_t')\geq 3R>R$.
So we have $d(M_R^t,M_R^{t'})>R$ in both cases.

Now Theorem~\ref{thm_PartThm} implies $\asdim(H)\leq n$ and thus we have $\asdim(G)\leq n$ by Remark~\ref{rem_QI+to*} and Proposition~\ref{prop_AsdimAndQI}.
\end{proof}

\bibliographystyle{amsplain}
\bibliography{D:/Repository/Bibs/Bibs}

\end{document}